\newtheorem{theorem}{Theorem}
\newtheorem*{Auerlm}{Auerbach's Lemma}
\newtheorem*{BMineq}{Brunn-Minkowski inequality}
\newcommand{\sgn}{\operatorname{sgn}}
\newcommand{\vol}{\operatorname{vol}}
\newcommand{\reals}{\mathbb{R}}
\newcommand{\real}[1]{\reals^{#1}}
\newcommand{\fn}[3]{{#1}:{#2}\rightarrow{#3}}
\newcommand{\set}[2]{\{ {#1} \:|\: {#2} \}}
\newcommand{\ipr}[2]{\langle {#1}, {#2} \rangle}
\newcommand{\vect}[1]{\bm{#1}}
\newcommand{\ve}{\vect{e}}
\newcommand{\vu}{\vect{u}}
\newcommand{\vv}{\vect{v}}
\newcommand{\vx}{\vect{x}}
\newcommand{\vy}{\vect{y}}
\newcommand{\vz}{\vect{z}}
\begin{document}
\title[Extremal Problems in Minkowski Space]{Extremal Problems in Minkowski Space related to Minimal Networks}
\author{K.\ J.\ Swanepoel}
\address{Department of Mathematics and Applied Mathematics 
\\ University of Pretoria \\ Pretoria 0002 \\ South Africa}
\subjclass{52A40 (Primary) 52A21, 49Q10 (Secondary)} 
\keywords{Minimal networks, Minkowski spaces, Finite-dimensional Banach 
spaces, Sums of unit vectors problem}
\email{konrad@math.up.ac.za}
\begin{abstract}
We solve the following problem of Z.\ F\"uredi, J.\ C.\ Lagarias
and F.\ Morgan \cite{FLM}:
Is there an upper bound polynomial in $n$ for the largest cardinality of a set
$S$ of unit vectors in an $n$-dimensional Minkowski space (or Banach space)
such that the sum of any subset has norm less than 1?
We prove that $|S|\leq 2n$ and that equality holds iff the space is linearly
isometric to $\ell^{n}_{\infty}$, the space with an $n$-cube as unit ball.
We also remark on similar questions raised in \cite{FLM} that arose out of
the study of singularities in length-minimizing networks in Minkowski spaces.
\end{abstract}
\maketitle
\section{Introduction}
In \cite{LM} Lawlor and Morgan derived a geometrical description for the
singularities (Steiner points) of a length-minimizing network connecting a 
finite set of points in a smooth Minkowski space (finite dimensional
Banach space).
In Euclidean space the geometrical description is equivalent to the classical
result that at a singularity three line segments meet at $120^{\circ}$
angles.
See also \cite{BG}, \cite{M} and \cite{CR} for a discussion of
length-minimizing networks and their history. The geometrical description of 
Lawlor and Morgan leads to extremal problems of a combinatorial type in
strictly convex Minkowski spaces.
Such problems are considered in \cite{FLM}.
In this note we briefly remark on some of these problems and solve one of the 
open problems stated in \cite{FLM} (see theorem \ref{mainth}).

\section{Preliminaries}
We denote the real numbers by $\reals$ and the real vector space of
$n$-tuples of real numbers by $\real{n}$.
The coordinates of a vector $\vx \in \real{n}$ will be denoted by
$\vx = (\vx(1), \vx(2), \dots, \vx(n))$.
The standard basis $\ve_{1}, \ve_{2}, \dots, \ve_{n}$ will be
used, where $\ve_{i}$ is the vector for which $\ve_{i}(i) = 1$ and 
$\ve_{i}(j) = 0$ for $i \neq j$.
A {\it Minkowski space} (or finite-dimensional Banach space) $(\real{n},
\Phi)$ is $\real{n}$ endowed with a norm $\Phi$.
A Minkowski space is {\it strictly convex} if $\Phi(\vx)=\Phi(\vy) 
=1, \vx\neq \vy$ implies $\Phi(\vx+\vy) < 2$.

We denote by $\ell^{n}_{p}$ the $n$-dimensional Minkowski space with norm
  $$\Phi_{p}(\vx) = \left(\sum_{i=1}^{n} |\vx(i)|^{p}\right)^{1/p}$$
for $p \geq 1$, and by $\ell^{n}_{\infty}$ the space with norm
  $$\Phi_{\infty}(\vx) = \max_{1\leq i\leq n} |\vx(i)|.$$

We now state Auerbach's lemma which relates the spaces $\ell^{n}_{1}$ and
$\ell^{n}_{\infty}$ to an arbitrary Minkowski space in $n$ dimensions.
A proof may be found in \cite[page 29]{Pietsch}.
                                       
\begin{Auerlm}
  For any Minkowski space $(\real{n},\Phi)$ there
  exists a linear isomorphism $\fn{T}{\real{n}}{\real{n}}$ such that
  $\Phi_{\infty}(\vx) \leq \Phi(T\vx) \leq \Phi_{1}(\vx)$,
  i.e.\
  \begin{equation}
    \label{eq:auerbach}
    \max_{1\leq i\leq n} |\vx(i)| \leq \Phi(T\vx)
     \leq \sum_{i=1}^{n} |\vx(i)|.
  \end{equation}
\end{Auerlm}

We denote the $n$-dimensional Lebesgue measure (or {\it volume}) of
measurable $V\subseteq\real{n}$ by $\vol(V)$.
If $U,V\subseteq\real{n}$, then we define $U+V=\set{\vu+\vv}{\vu\in U,\vv\in V}$.
The Brunn-Minkowski inequality relates the volumes of compact $U$
and $V$ to that of $U+V$.
A proof may be found in \cite{BZ}.

\begin{BMineq}
If $U,V\subseteq\real{n}$ are compact, then
$$(\vol(U+V))^{1/n} \geq (\vol(U))^{1/n}+(\vol(V))^{1/n}.$$
\end{BMineq}

\section{Extremal Problems}           
  \label{sec3}
From now on $S$ will denote a finite set of unit vectors in a Minkowski space.
In \cite{FLM} the following type of extremal problems is considered:
Find the largest cardinality of $S$ satisfying a selection of the following
conditions:
$$\begin{array}{lcl}
  (A) & \Phi(\sum_{\vx\in J}\vx) \leq 1 \text{ for all } J
  \subseteq S & \text{(the {\it strong collapsing condition})}  \\[2pt]
  (A') & \Phi(\vx + \vy) \leq 1\text{ for all } \vx,\vy\in S, 
   \vx\neq \vy & \text{(the {\it weak collapsing condition})} \\[2pt]
  (B) & \sum_{\vx\in S}\vx = \bm{0}
  & \text{(the {\it strong balancing condition})} \\[2pt]
  (B') & \begin{array}[c]{c}
           \bm{0} \text{ is in the relative interior } \\
           \text{of the convex hull of } S
         \end{array} & \text{(the {\it weak balancing condition})}
\end{array}$$       
See \cite{LM} and \cite{FLM} for the connection between these conditions and 
minimal networks.
In \cite{FLM} it is proved that $(A')$ and $(B')$ together give an upper bound
$|S| \leq 2n$ for an arbitrary Minkowski space, and $|S| \leq n+1$ for
strictly convex Minkowski spaces.
In \cite{LM} it is proved that there exist a strictly convex norm on
$\real{n}$ and a subset $S$ of $n+1$ unit vectors satisfying $(A)$ and $(B)$.
$|S| = 2n$ is attained in for example $\ell^{n}_{\infty}$ with 
$S= \set{\pm\ve_{i}}{1\leq i\leq n}$, in which case even the strong 
conditions $(A)$ and $(B)$ hold.
However, there are other Minkowski spaces where equality is also attained 
(see theorem \ref{th1}).
This is to be contrasted with theorem \ref{mainth} where we show that the
extreme case for $S$ satisfying $(A)$ and $(B)$ can only be attained for 
$\ell^{n}_{\infty}$.
\begin{theorem}
\label{th1}
  For infinitely many $n\geq1$ there exists a set of unit vectors
  $S = \{\vx_{1},\dots, \vx_{2n}\} \subseteq \ell^{n}_{1}$ satisfying
  $(A')$ and the strong balancing condition $(B)$.
  In particular, such a set exists if a Hadamard matrix of order $n$ exists.
\end{theorem}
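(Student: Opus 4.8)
The plan is to construct $S$ directly from the rows of a Hadamard matrix of order $n$, normalizing each row to an $\ell^{n}_{1}$-unit vector and then adjoining its negative, so that the strong balancing condition $(B)$ is automatic and only the weak collapsing condition $(A')$ requires real work.

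First I would fix a Hadamard matrix $H$ of order $n$, that is, an $n\times n$ matrix with entries $\pm 1$ whose rows $\vect{h}_{1},\dots,\vect{h}_{n}\in\{-1,+1\}^{n}$ satisfy $\ipr{\vect{h}_{i}}{\vect{h}_{j}} = n\,\delta_{ij}$. Setting $\vx_{i} = \frac{1}{n}\vect{h}_{i}$ gives $\Phi_{1}(\vx_{i}) = \frac{1}{n}\sum_{k=1}^{n}|\vect{h}_{i}(k)| = 1$, so each $\vx_{i}$ is a unit vector of $\ell^{n}_{1}$. I would then take
$$ S = \{\vx_{1},\dots,\vx_{n},\,-\vx_{1},\dots,-\vx_{n}\}, $$
a set of $2n$ unit vectors. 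Condition $(B)$ is immediate, since the vectors occur in antipodal pairs and hence $\sum_{\vx\in S}\vx = \bm{0}$.

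The key step is verifying $(A')$, and this is where the Hadamard structure does the work. For a pair of the form $\vx_{i}$ and $-\vx_{i}$ the sum is $\bm{0}$, so $\Phi_{1} = 0 \le 1$. For distinct indices $i\neq j$ and either sign I must control $\Phi_{1}(\vx_{i}\pm\vx_{j}) = \frac{1}{n}\Phi_{1}(\vect{h}_{i}\pm\vect{h}_{j})$. Here I would use orthogonality: each coordinate of $\vect{h}_{i}\pm\vect{h}_{j}$ is either $0$ or $\pm2$, and since $\ipr{\vect{h}_{i}}{\vect{h}_{j}} = 0$ the number of coordinates where the rows agree equals the number where they disagree, namely $n/2$ each. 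Hence $\vect{h}_{i}\pm\vect{h}_{j}$ has exactly $n/2$ nonzero entries, each of absolute value $2$, so $\Phi_{1}(\vect{h}_{i}\pm\vect{h}_{j}) = n$ and $\Phi_{1}(\vx_{i}\pm\vx_{j}) = 1$. By the symmetry $\Phi_{1}(-\vz) = \Phi_{1}(\vz)$ this covers every unordered non-antipodal pair in $S$, so $(A')$ holds (in fact with equality whenever the two vectors are not antipodal).

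Finally, for the ``infinitely many $n$'' claim I would invoke the classical fact, due to Sylvester, that a Hadamard matrix exists in every order $n = 2^{k}$, obtained by repeatedly doubling via the tensor construction $H \mapsto \bigl(\begin{smallmatrix} H & H \\ H & -H\end{smallmatrix}\bigr)$, which already yields infinitely many admissible $n$. I expect the only genuine work to be the counting argument in the previous paragraph; everything else is bookkeeping. The one point to watch is that orthogonality of distinct rows forces $n$ to be even, so the split into $n/2$ agreements and $n/2$ disagreements is always a genuine integer in the relevant orders.
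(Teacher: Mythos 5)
Your proof is correct and takes essentially the same approach as the paper: the paper also normalizes the (column rather than row) vectors of a Hadamard matrix by $1/n$ and takes $S=\set{\pm\vx_{i}}{1\leq i\leq n}$, with orthogonality giving $\Phi_{1}(\vx_{i}\pm\vx_{j})=1$; the row/column distinction is immaterial since $HH^{t}=nI$ is equivalent to $H^{t}H=nI$. You merely spell out the coordinate-counting behind the orthogonality step and the Sylvester construction for infinitude, both of which the paper leaves implicit.
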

\begin{proof}
We recall that an $n\times n$ Hadamard matrix $H$ consists of $(\pm 1)$-entries
such that $HH^{t} = nI$, and such matrices exist for infinitely many $n$ (see
\cite[chapter 18]{vLW}).
We let $\vv_{1},\dots,\vv_{n}$ be the column vectors of $H$, and
set $\vx_{i}=\frac{1}{n}\vv_{i}$ for $i=1,\dots,n$.
Then $S:= \set{\pm\vx_{i}}{1\leq i\leq n}$ is a set of $2n$ unit vectors.
Since the column vectors of $H$ are orthogonal, $\ipr{\vv_{i}}{\vv_{j}} = 0$ for $i\neq j$, implying that
$\Phi_{1}(\vx_{i} + \vx_{j}) = 1$ and $\Phi_{1}(\vx_{i}
- \vx_{j}) = 1$ for all $i \neq j$.
It follows that $S$ satisfies $(A')$ and $(B)$.
\end{proof}

The question now is what happens if there is no balancing condition present.
In \cite{FLM} an upper bound of $|S| < 3^{n}$ is derived from the weak
collapsing condition $(A')$ alone using a volume argument.
Using the Brunn-Minkowski inequality we obtain a sharper bound (theorem 
\ref{th2}).
In \cite{FLM} a strictly convex norm and a set $S$ of unit vectors with 
$|S| \geq (1.02)^{n}$ satisfying $(A')$ are constructed for all sufficiently 
large $n$.
It would be interesting to find the greatest lower bound of the $\alpha$'s
for which $|S|\leq\alpha^{n}$ for any set $S$ of unit vectors in an 
arbitrary Minkowski space satisfying $(A')$, and sufficiently large $n$.

\begin{theorem}
  \label{th2}
  If a set $S$ of unit vectors in $\real{n}$ satisfy $(A')$, then $|S| <
    2^{n+1}$.
\end{theorem}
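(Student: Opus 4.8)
The plan is to combine a separation estimate forced by $(A')$ with a volume computation powered by the Brunn--Minkowski inequality. First I would record the separation: for distinct $\vx,\vy\in S$ the reverse triangle inequality gives
$$\Phi(\vx-\vy)\geq\Phi(2\vx)-\Phi(\vx+\vy)=2-\Phi(\vx+\vy)\geq 1,$$
so the vectors of $S$ are pairwise at distance at least $1$. Writing $B=\set{\vz}{\Phi(\vz)\leq 1}$ for the unit ball, this shows that the balls $\tfrac12\vx+\tfrac14 B$ ($\vx\in S$) have pairwise disjoint interiors, since their centres lie $\geq\tfrac12$ apart while the sum of their radii is $\tfrac12$.

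A naive packing of all these balls into $\tfrac32 B$ only reproduces the known bound $3^{n}$, so the key idea---and the step I expect to be the main obstacle to discover---is to exploit the collapsing condition through a \emph{bipartite splitting} rather than a single packing. I would partition $S=S_{1}\cup S_{2}$ with $|S_{i}|=m_{i}$ and set
$$A_{i}=\tfrac12 S_{i}+\tfrac14 B=\bigcup_{\vx\in S_{i}}\left(\tfrac12\vx+\tfrac14 B\right).$$
By the disjointness above, $\vol(A_{i})=m_{i}(\tfrac14)^{n}\vol(B)$. The crucial gain is that the cross Minkowski sum lands inside the \emph{unit} ball: since $S_{1}$ and $S_{2}$ are disjoint, each $\vx\in S_{1}$ and $\vy\in S_{2}$ are distinct, so $(A')$ gives $\tfrac12\vx+\tfrac12\vy\in\tfrac12 B$, whence
$$A_{1}+A_{2}=(\tfrac12 S_{1}+\tfrac12 S_{2})+\tfrac12 B\subseteq\tfrac12 B+\tfrac12 B=B.$$

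I would then apply the Brunn--Minkowski inequality to the compact sets $A_{1},A_{2}$:
$$(\vol B)^{1/n}\geq\vol(A_{1}+A_{2})^{1/n}\geq\vol(A_{1})^{1/n}+\vol(A_{2})^{1/n}=\tfrac14\bigl(m_{1}^{1/n}+m_{2}^{1/n}\bigr)(\vol B)^{1/n},$$
so that $m_{1}^{1/n}+m_{2}^{1/n}\leq 4$ for \emph{every} partition. Choosing the balanced partition $m_{1}=\lceil|S|/2\rceil$, $m_{2}=\lfloor|S|/2\rfloor$ then yields $|S|\leq 2^{n+1}$. To sharpen this to the strict bound I expect to invoke the equality case of Brunn--Minkowski: each $A_{i}$ is a union of at least two disjoint balls, hence not convex, forcing $m_{1}^{1/n}+m_{2}^{1/n}<4$ and therefore $|S|<2^{n+1}$. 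The points to verify carefully are that the radius $\tfrac14$ is simultaneously the largest compatible with disjointness and the largest keeping $A_{1}+A_{2}$ inside $B$ (so that no slack is wasted in either estimate), and that the non-convexity of the $A_{i}$ genuinely rules out equality.
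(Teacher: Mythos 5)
Your proof is correct, and its skeleton is the same as the paper's: the separation $\Phi(\vx-\vy)\geq 1$ from the triangle inequality, a balanced bipartition of $S$, packing balls of half the separation radius, containment of the cross Minkowski sum in a fixed ball via $(A')$, and Brunn--Minkowski. The one genuine difference is how strictness is obtained. You derive $m_{1}^{1/n}+m_{2}^{1/n}\leq 4$ and must then kill the borderline case $|S|=2^{n+1}$ by invoking the \emph{equality case} of Brunn--Minkowski for compact sets (equality forces both summands to be convex bodies up to null sets), which your unions of at least two balls with disjoint interiors violate. That argument is valid --- the equality characterization for compact sets is classical and can be found in \cite{BZ} --- but it is a strictly deeper ingredient than the inequality itself, and it only applies because $2^{n+1}\geq 4$ guarantees $m_{i}\geq 2$; as stated, the paper's version of the Brunn--Minkowski inequality carries no equality clause, so your proof uses a tool the paper never states. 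The paper sidesteps this with a small trick: it adjoins the ball $B(\bm{0},\frac{1}{2})$ to each of the two unions $V_{i}$. Since every element of $S$ is a unit vector, this extra ball has interior disjoint from the others, and since $\bm{0}+\vy$ and $\bm{0}+\bm{0}$ still have norm at most $1$, the containment $V_{1}+V_{2}\subseteq B(\bm{0},2)$ survives. The ball counts then become $\lfloor k/2\rfloor+1$ and $\lceil k/2\rceil+1$, both \emph{strictly} larger than $k/2$, so the plain, non-strict Brunn--Minkowski inequality already yields $(k/2)^{1/n}<2$, i.e.\ the strict bound. In short, your route is sound but pays for strictness with a deeper theorem, where a one-ball augmentation of each $V_{i}$ would have sufficed.
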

\begin{proof}
We denote the closed unit ball with centre $\vx$ and radius $r$ by
$B(\vx,r) = \set{\vy\in\real{n}}{\Phi(\vx - \vy)\leq r}$, and the volume 
of a ball of unit radius by $\beta$.
For distinct $\vx,\vy\in S$ we obtain from the triangle inequality
that $\Phi(\vx-\vy) \geq 1$.
Let $k=|S|$.
We partition $S$ into two sets $S_{1}$ and $S_{2}$ of sizes
$\lfloor{k/2}\rfloor$ and $\lceil{k/2}\rceil$, respectively.
Let $V_{i} = B(\bm{0},\frac{1}{2}) \cup \bigcup_{\vx\in S_{i}} B(\vx,\frac{1}{2})$ for $i=1,2$.
Clearly, each $V_{i}$ consists of closed balls with disjoint interiors, and
therefore, $\vol(V_{1})=\beta(\lfloor k/2\rfloor+1)2^{-n}$ and
$\vol(V_{2})=\beta(\lceil k/2\rceil + 1)2^{-n}$.
Using $(A')$ we obtain $V_{1}+ V_{2}\subseteq B(\bm{0},2)$, and
$\vol(V_{1}+V_{2})\leq 2^{n}\beta$.
By the Brunn-Minkowski inequality we now have
$$2\beta^{1/n} \geq {\textstyle \frac{1}{2}}\beta^{1/n}
  (\lfloor k/2\rfloor+1)^{1/n} +
   {\textstyle \frac{1}{2}}\beta^{1/n}(\lceil k/2\rceil+1)^{1/n} > 
\beta^{1/n}(k/2)^{1/n},$$
and $|S| < 2^{n+1}$.
\end{proof}

From the above proof we actually find that if $\Phi(\vx) = \Phi(\vy)
= 1$ and $\Phi(\vx+\vy) \leq 1$ imply $\Phi(\vx-\vy) \geq r 
> 1$, then $|S|\leq 2(1+1/r)^{n}+1$ for $S$ satisfying $(A')$.
Such is the case for $\ell^{n}_{p}$: It follows from the Clarkson inequality
\cite{C} for $p\geq 2$, and the Hanner inequality \cite{H} for 
$1<p<2$, that $r$ may be taken to be $3^{1/p}$ for $p\geq 2$, and
$(2^{p}-1)^{1/p}$ for $1<p<2$.

For $\ell^{n}_{1}$ an upper bound $|S|\leq 2^n$ holds:
If the coordinates of two unit vectors $\vx$ and $\vy$ have
the same sequence of signs, i.e.\ $\sgn(\vx(i)) = \sgn(\vy(i))$ for 
all $i=1,\dots,n$, then $\Phi_{1}(\vx+\vy) =2$, contradicting $(A')$.
In the Euclidean case $\ell^{n}_{2}$ we of course have $|S| \leq 3$, 
independent of $n$.
For $\ell^{n}_{\infty}$ the sharp upper bound $|S| \leq 2n$ holds:
If $|S| \geq 2n+1$, then by the pigeon-hole principle there are three vectors
$\vx,\vy,\vz\in S$ and an $i\in\{1,\dots,n\}$ such that $|\vx(i)| 
= |\vy(i)| = |\vz(i)| = 1$.
Some two of these vectors will have the same sign in the $i$th coordinate,
and their sum will then have a norm of 2.

In \cite[problem 3.7]{FLM} the question is asked whether the strong
collapsing condition $(A)$ on its own gives an upper bound for $|S|$ that is 
polynomial in $n$.                                                    
A linear upper bound may be derived by the same technique as in theorem 
\ref{th2}.
We partition the elements of $S$ except for at most 2 into subsets
$S_{1},\dots,S_{k}$ of size 3, where $k=\lfloor |S|/3 \rfloor$.
For $i=1,\dots,k$ let
 $$V_{i}=\bigcup_{\vx\in S_{i}} B(\vx,{\textstyle\frac{1}{2}}) \cup
\bigcup_{\vx,\vy\in S_{i},\vx\neq \vy} B(\vx+\vy,{\textstyle\frac{1}{2}}).$$
From $(A)$ it follows that each $V_{i}$ consists of 6 balls with disjoint 
interiors, and $V_{1}+\dots+V_{k}\subseteq B(\bm{0},\frac{1}{2}k+1)$.
By the Brunn-Minkowski inequality we obtain $\frac{1}{2}k+1\geq
\frac{1}{2}6^{1/n}k$, and $k\leq 2/(6^{1/n}-1)$.
Therefore, $|S|\leq 6/(6^{1/n}-1)+2 < (6/\ln 6)n$, after some calculus.
This bound is not sharp, however.
In the following theorem we derive the sharp upper bound $|S|\leq 2n$.

\begin{theorem}
\label{mainth}
Let $S$ be a finite set of unit vectors in a Minkowski space
$(\real{n},\Phi)$ satisfying the collapsing condition $(A)$.
Then $|S| \leq 2n$, and equality holds iff $(\real{n},\Phi)$ is linearly
isometric to $\ell^{n}_{\infty}$, with $S$ corresponding to the set
$\set{\pm \ve_{i}}{1\leq i\leq n}$ under any isometry.
\end{theorem}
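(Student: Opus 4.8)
The easy direction is immediate: in $\ell^{n}_{\infty}$ with $S=\set{\pm\ve_{i}}{1\le i\le n}$, every subset sum has each coordinate in $\{-1,0,1\}$, so $\Phi_{\infty}(\sum_{\vx\in J}\vx)\le1$ and $(A)$ holds with $|S|=2n$; the same persists under any linear isometry. The substance is the upper bound together with the rigidity statement, and the engine I would use for both is a single consequence of $(A)$ obtained by testing it against norming functionals. Let $B=\set{\vy\in\real{n}}{\Phi(\vy)\le1}$. If $g$ is any functional with dual norm $\Phi^{*}(g):=\sup_{\Phi(\vy)\le1}g(\vy)\le1$, then for every $J\subseteq S$ we have $\sum_{\vx\in J}g(\vx)=g(\sum_{\vx\in J}\vx)\le\Phi(\sum_{\vx\in J}\vx)\le1$; choosing $J=\set{\vx\in S}{g(\vx)>0}$ and then its complement gives
\begin{equation*}
  \sum_{\vx\in S}|g(\vx)|\le2\qquad\text{whenever }\Phi^{*}(g)\le1.
\end{equation*}
A sharper local form, which I would reserve for the rigidity, is this: if $g$ is a \emph{norming} functional of some $\vx_{0}\in S$ (so $\Phi^{*}(g)=1$ and $g(\vx_{0})=1$), then applying the same argument to subsets containing $\vx_{0}$ forces $\sum_{\vx\in K}g(\vx)\le0$ for every $K\subseteq S\setminus\{\vx_{0}\}$, whence $g(\vy)\le0$ for all $\vy\in S\setminus\{\vx_{0}\}$. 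In other words, the supporting hyperplane of $B$ at $\vx_{0}$ separates $\vx_{0}$ from the rest of $S$.

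For the bound I would invoke Auerbach's lemma to produce a basis $\vu_{1},\dots,\vu_{n}$ with $\Phi(\vu_{i})=1$ together with a dual basis $f_{1},\dots,f_{n}$ (so $f_{i}(\vu_{j})=\delta_{ij}$ and $\Phi^{*}(f_{i})\le1$); concretely $\vu_{i}=T\ve_{i}$ and $f_{i}(\vy)=(T^{-1}\vy)(i)$. Since $\vx=\sum_{i}f_{i}(\vx)\vu_{i}$, the triangle inequality yields $\Phi(\vx)\le\sum_{i}|f_{i}(\vx)|$ for every $\vx$, and therefore
\begin{equation*}
  |S|=\sum_{\vx\in S}\Phi(\vx)\le\sum_{\vx\in S}\sum_{i=1}^{n}|f_{i}(\vx)|
      =\sum_{i=1}^{n}\sum_{\vx\in S}|f_{i}(\vx)|\le\sum_{i=1}^{n}2=2n,
\end{equation*}
the last step being the displayed inequality applied to each $f_{i}$. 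This settles $|S|\le2n$.

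Equality throughout forces two families of conditions: (I) $\sum_{\vx\in S}|f_{i}(\vx)|=2$ for each $i$, and (II) $\Phi(\vx)=\sum_{i}|f_{i}(\vx)|$ for each $\vx\in S$, i.e.\ every $\vx\in S$ lies on the boundary of $B$ and of the cross-polytope spanned by $\pm\vu_{1},\dots,\pm\vu_{n}$, with the triangle inequality above tight. The plan from here is to combine (I), (II) and the separation property of the norming functionals to show that $S$ consists of $n$ antipodal pairs $\pm\vw_{1},\dots,\pm\vw_{n}$ forming a basis, and that $B$ coincides with the parallelotope $\set{\vy}{\max_{i}|g^{(i)}(\vy)|\le1}$, where $g^{(i)}$ is the dual basis of $\vw_{i}$. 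The linear map $\vw_{i}\mapsto\ve_{i}$ is then an isometry onto $\ell^{n}_{\infty}$ carrying $S$ to $\set{\pm\ve_{i}}{1\le i\le n}$.

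I expect the rigidity to be the main obstacle, for two reasons. First, conditions (I) and (II) are genuinely insufficient on their own: one can write down $2n$ distinct $\ell_{1}$-unit vectors satisfying both (e.g.\ $(1,0),(0,1),(-\tfrac34,-\tfrac14),(-\tfrac14,-\tfrac34)$ for $n=2$) that do not form antipodal pairs, so the full strength of $(A)$, embodied in the separation property $g(\vy)\le0$, must be used to force the pairing. The clean way to exploit it is to note that $B$ contains the convex hull of all subset sums (each has norm $\le1$, and $\bm{0}$ is the empty sum) while each $\vx\in S$ must remain an \emph{exposed} point of $B$; a vector surrounded by other subset sums is pushed into the interior, which is impossible, and this is exactly what eliminates non-cube unit balls. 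Second, one must avoid using the Auerbach map $T$ for the final isometry: the extremal space need not equal $\Phi_{\infty}\circ T^{-1}$, since already for $n=2$ the Hadamard configuration of Theorem \ref{th1} is extremal with unit ball a cross-polytope, isometric to the cube only through a further rotation. Thus the cube basis $\vw_{i}$ has to be recovered intrinsically from $S$ and its norming functionals rather than read off from the Auerbach decomposition.
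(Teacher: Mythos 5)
Your proof of the inequality $|S|\le 2n$ is correct, and it is the paper's own argument in dual clothing: your functionals $f_i(\vy)=(T^{-1}\vy)(i)$ are exactly the coordinates the paper works with after applying Auerbach's lemma, the bound $\Phi(\vx)\le\sum_i|f_i(\vx)|$ is the right-hand half of \eqref{eq:auerbach}, and your inequality $\sum_{\vx\in S}|g(\vx)|\le2$ for $\Phi^{*}(g)\le1$, applied with $g=f_i$, is precisely the pair \eqref{ineq2}--\eqref{ineq3}. Your separation lemma for norming functionals is also correct. The genuine gap is the equality case: what you give there is a plan (``The plan from here is to combine (I), (II) and the separation property\dots''), not a proof, and the two steps it rests on are respectively missing and wrong. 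Missing: the assertion that $S$ must consist of $n$ antipodal pairs $\pm\vx_1,\dots,\pm\vx_n$ forming a basis is nowhere established; your own $\ell_1$ example shows that (I) and (II) cannot yield it, and the separation property is never actually brought to bear on it. The paper obtains this step by noting that equality in \eqref{ineq2}, \eqref{ineq3} and \eqref{ineq4} forces the strong balancing condition $\sum_{\vx\in S}\vx=\bm{0}$, and then quoting Theorem 3.1 of \cite{FLM}, which says precisely that $(A')$, $(B')$ and $|S|=2n$ force $S=\set{\pm\vx_i}{1\le i\le n}$ with the $\vx_i$ linearly independent. Wrong: your mechanism ``each $\vx\in S$ must remain an exposed point of $B$'' fails in the very extremal configuration you are aiming for; in $\ell^n_\infty$ with $S=\set{\pm\ve_i}{1\le i\le n}$ the points $\pm\ve_i$ are midpoints of facets of the cube, hence not exposed (indeed not even extreme) points of the unit ball once $n\ge2$, so no contradiction of the kind you describe can arise.

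For comparison, the paper finishes the rigidity quite differently: it shows that the $2^n$ subset sums $T=\set{\sum_{i\in A}\vx_i}{A\subseteq\{1,\dots,n\}}$ are pairwise at distance exactly $1$ --- the upper bound directly from $(A)$, the lower bound from the triangle-inequality trick $2=\Phi(2\vx_j)\le\Phi(\sum_{i\in A}\vx_i-\sum_{i\in B}\vx_i)+\Phi(\sum_{i\in B\cup\{j\}}\vx_i-\sum_{i\in A\setminus\{j\}}\vx_i)$ --- and then invokes Petty's theorem \cite{Petty} that an equilateral set of $2^n$ points forces the space to be linearly isometric to $\ell^n_\infty$. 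It is worth noting that your separation lemma would give an alternative, Petty-free finish once the antipodal-pair step is supplied: if $S=\set{\pm\vx_i}{1\le i\le n}$ with the $\vx_i$ a basis and $h$ is a norming functional of $\vx_i$, then $h(\pm\vx_j)\le0$ for $j\ne i$ forces $h(\vx_j)=0$, so $h$ equals the dual basis functional $g^{(i)}$ and $\Phi^{*}(g^{(i)})=1$; hence the unit ball is contained in the parallelotope $\set{\vy}{\max_i|g^{(i)}(\vy)|\le1}$, while the reverse inclusion holds because the vertices $\sum_i\pm\vx_i$ of that parallelotope are subset sums of $S$, so lie in the ball by $(A)$ and convexity. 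But as submitted, the equality half of the theorem is not proved.
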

\begin{proof}
By Auerbach's lemma we may assume (after applying a linear isomorphism
of $\real{n}$) that for any vector $\vx\in \real{n}$ the inequalities 
\eqref{eq:auerbach} hold, with $T$ now the identity.
Choose $m$ distinct vectors $\vx_{1},\dots,\vx_{m}$ from $S$.
By \eqref{eq:auerbach} we have
\begin{equation}            
\label{ineq1}
  \sum_{i=1}^{n} |\vx_{j}(i)| \geq 1 \text{ for all } j=1,\dots m.
\end{equation}
Suppose that for some coordinate $i\in\{1,\dots,n\}$ we have
$$\sum_{\substack{j=1\\ \vx_{j}(i) \geq 0}}^{n} \negmedspace\vx_{j}(i) > 1.$$
Then
$$ \Phi(\negmedspace\sum_{\substack{j=1 \\ \vx_{j}(i) \geq 0}}^{m}
        \negmedspace\vx_{j}) \geq
        \sum_{\substack{j=1 \\ \vx_{j}(i) \geq 0}}^{m} 
        \negmedspace\vx_{j}(i)$$
by \eqref{eq:auerbach}, contradicting $(A)$.
Therefore,
\begin{equation}
  \label{ineq2}
  \sum_{\substack j=1 \\ \vx_{j}(i) \geq 0}^{m}
    \negthickspace \vx_{j}(i)
   \leq 1 \text{ for all } i=1,\dots n,
\end{equation}
and similarly,
\begin{equation}
  \label{ineq3}
  \sum_{\substack{j=1 \\ \vx_{j}(i) \leq 0}}^{m}
     \negthickspace -\vx_{j}(i)
  \leq 1 \text{ for all } i=1,\dots,n.
\end{equation}
From \eqref{ineq2} and \eqref{ineq3} it follows that $\sum_{j=1}^{m}|\vx_{j}(i)| \leq 2$ for all $i=1,\dots,n$, and from \eqref{ineq1} we have
\begin{equation}
\label{ineq4}
  m \leq \sum_{j=1}^{m}\sum_{i=1}^{n} |\vx_{j}(i)| \leq 2n,
\end{equation}
and $|S| \leq 2n$.

If $|S| = 2n$ for some set of unit vectors
 $S=\{\vx_{1},\dots,\vx_{2n}\}$ satisfying $(A)$, then equality must 
hold in \eqref{ineq4}, \eqref{ineq2} and \eqref{ineq3}.
Therefore, $\sum_{j=1}^{2n}\vx_{j} = 0$, showing that in the extreme
case the strong balancing condition $(B)$ must be satisfied.
We now show that conditions $(A)$ and $(B)$ together with the assumption
$|S|=2n$ imply that $(\real{n},\Phi)$
is linearly isometric to $\ell^{n}_{\infty}$, and $S$ corresponds to 
$\set{\pm\ve_{i}}{1\leq i\leq n}$, as claimed in \cite{FLM}.

We recall theorem 3.1 of \cite{FLM}:
\begin{quote}
\it
  If $(\real{n},\Phi)$ is a Minkowski space and $S$ is a set of unit vectors
  satisfying $(A')$ and $(B')$, then $|S|\leq 2n$, and if equality holds, then 
  $S$ corresponds to $\set{\pm\ve_{i}}{1\leq i\leq n}$ under some linear
  isomorphism.
\end{quote}
We therefore have $S=\set{\pm\vx_{i}}{1\leq i\leq n}$ where the 
$\vx_{i}$'s are linearly independent.
We first show that if $(\real{n},\Phi) = \ell^{n}_{\infty}$ then 
$S=\set{\pm\ve_{i}}{1\leq i\leq n}$ must hold.
For $i=1,\dots,n$ choose $j_{i}\in\{1,\dots,n\}$ such that $|\vx_{i}(j_{i})| = 1$.
After renaming, we may assume $\vx_{i}(j_{i})=1$.
The $j_{i}$'s must be distinct, otherwise $(A)$ is contradicted.
We may therefore rename the $\vx_{i}$'s to obtain $\vx_{i}(i)=1$ for 
$i=1,\dots,n$.
If we have $\vx_{i}(j) \neq 0$ for some $i\neq j$, then either 
$\Phi_{\infty}(\vx_{i}+\vx_{j}) > 1$ or $\Phi_{\infty}(-\vx_{i}+\vx_{j}) > 1$, contradicting $(A)$.
Therefore, $\vx_{i}(j)=0$ for all $i\neq j$, and we have 
 $S=\set{\pm\ve_{i}}{i=1,\dots,n}$.

To show that in fact $(\real{n},\Phi)$ is linearly isometric to 
$\ell^{n}_{\infty}$, we use the following theorem of Petty \cite{Petty} 
(see also \cite[theorem 2.1]{FLM}):
\begin{quote}
\it
  If $T$ is a subset of a Minkowski space $(\real{n},\Phi)$ such that
  $\Phi(\vx-\vy)=1$ for all $\vx,\vy\in T, \vx\neq \vy$, 
  then $|T|\leq 2^n$, with equality iff $(\real{n},\Phi)$ is linearly
  isometric to $\ell^{n}_{\infty}$.
\end{quote}
We will apply this theorem to the set $T = \set{\sum_{i\in A}\vx_{i}}{A
\subseteq \{1,\dots,n\}}$.
Obviously $|T|=2^n$.
We now show that
\begin{equation}
\label{eq1}
 \Phi(\sum_{i\in A} \vx_{i} - \sum_{i\in B} \vx_{i})
= 1 \text{ for all } A,B\subseteq \{1,\dots,n\} \text{, } A\neq B,
\end{equation}
thus completing the proof.
Firstly, we have
  $$\Phi(\sum_{i\in A} \vx_{i} - \sum_{i\in B} \vx_{i})
     = \Phi(\sum_{i\in A\setminus B} \negmedspace\vx_{i}
     + \sum_{i\in B\setminus A} \negmedspace -\vx_{i})
    \leq 1$$
by $(A)$.
Secondly, $A\setminus B \neq \emptyset$ or $B\setminus A \neq \emptyset$,
since $A\neq B$.
We assume without loss that $A\setminus B\neq\emptyset$ and choose $j\in
A\setminus B$.
Then
\begin{align*}
  2 = \Phi(2\vx_{j}) 
    & \leq  \Phi(\sum_{i\in A} \vx_{i} - \sum_{i\in B} \vx_{i})
          + \Phi(\sum_{i\in B\cup\{j\}} \!\negthickspace \vx_{i}
                  - \sum_{i\in A\setminus\{j\}} \!\negthickspace\vx_{i}) \\
    & \leq  \Phi(\sum_{i\in A} \vx_{i} - \sum_{i\in B} \vx_{i}) + 1,
\end{align*}
showing that \eqref{eq1} holds.
\end{proof}

For strictly convex norms the bound in the above theorem should perhaps be 
$|S|\leq n+1$, but this seems to require a new idea.


\begin{thebibliography}{MMM}

\bibitem[BG]{BG} M. W. Bern and R. L. Graham, {\em The shortest-network problem},
Scientific American (January 1989), 66--71.
       
\bibitem[BZ]{BZ} Yu.\ D.\ Burago and V.\ A.\ Zalgaller, {\em Geometric 
inequalities,} Springer-Verlag, Berlin, Heidelberg, New York, 1988.

\bibitem[C]{C} J.\ A.\ Clarkson, {\em Uniformly convex spaces,} Trans.\
Amer.\ Math.\ Soc.\ {\bf 40} (1936), 296--414.

\bibitem[CR]{CR} R. Courant and H. Robbins, {\em What is Mathematics?},
Oxford Univ.\ Press, Oxford, 1941.

\bibitem[FLM]{FLM} Z. F\"uredi, J. C. Lagarias and F. Morgan, {\em Singularities of
minimal surfaces and networks and related extremal problems in Minkowski
space}, Discrete and Computational Geometry (New Brunswick, NJ, 1989/1990),
DIMACS Ser.\ Discrete Math.\ Theoret.\ Comput.\ Sci.\, 6,
(J. E. Goodman, R. Pollack and W. Steiger, eds.),
Amer. Math. Soc., Providence, RI, 1991, pp.\ 95--109.

\bibitem[H]{H} O.\ Hanner, {\em On the uniform convexity of $L^{p}$ and 
$\ell^{p}$,} Ark.\ Mat.\ {\bf 3} (1956), 239--244.

\bibitem[LM]{LM} G. R. Lawlor and F. Morgan, {\em Paired calibrations applied to
soap films, immiscible fluids, and surfaces and networks minimizing other
norms}, Pacific J.\ Math.\ {\bf 166} (1994), 55--83.

\bibitem[M]{M} F. Morgan, {\em Crystals, networks, and undergraduate research},
Math. Intelligencer {\bf 14} (1992), 37--44.

\bibitem[Pe]{Petty} C. M. Petty, {\em Equilateral sets in Minkowski spaces},
Proc. Amer. Math. Soc. {\bf 29} (1971), 369--374.

\bibitem[Pi]{Pietsch} A. Pietsch, {\em Operator Ideals}, North-Holland,
Amsterdam, 1980.
    
\bibitem[vLW]{vLW} J. H. van Lint and R. M. Wilson, {\em A Course in 
Combinatorics}, Cambridge Univ.\ Press, Cambridge, 1992.
\end{thebibliography}
\end{document}